\newcounter{mathitem}
\newenvironment{mathitem}
  {\begin{list}{$(\roman{mathitem})$}{
   \setcounter{mathitem}{0}
   \usecounter{mathitem}
   \setlength{\topsep}{0pt plus 2pt minus 0pt}
   \setlength{\parskip}{0pt plus 2pt minus 0pt}
   \setlength{\partopsep}{0pt plus 2pt minus 0pt}
   \setlength{\parsep}{0pt plus 2pt minus 0pt}
   \setlength{\leftmargin}{35pt}
   \setlength{\itemsep}{0pt plus 2pt minus 0pt}}}
 {\end{list}}
\newtheorem{theorem}{Theorem}
\newtheorem{definition}[theorem]{Definition}
\newtheorem{lemma}[theorem]{Lemma}
\newtheorem{corollary}[theorem]{Corollary}
\title{A Best Possible Result for the Square of a 2-Block to be Hamiltonian}
\author{Jan Ekstein\thanks{Department of Mathematics and European Centre of Excellence NTIS - New Technologies for the Information Society, Faculty of Applied Sciences, University of West Bohemia, Pilsen, Technick\'a 8, 306 14 Plze\v n, Czech Republic
        \newline e-mail: \texttt{ekstein@kma.zcu.cz}.}\and
        Herbert Fleischner\thanks{Institute of Logic and Computation, Algorithms and Complexity Group, Technical University of Vienna, Favoritenstrasse 9-11, 1040 Wien, Austria, EU  
        \newline e-mail: \texttt{fleischner@ac.tuwien.ac.at}.}}
\date{\today}
\begin{document}
\maketitle

\begin{abstract}
 It is shown that for any choice of four different vertices $x_1,...,x_4$ in a 2-block $G$ of order 
 $p>3$, there is a hamiltonian cycle in $G^2$ containing four different edges $x_iy_i$ of 
 $E(G)$ for certain vertices $y_i$, $i=1,2,3,4$. This result is best possible.
     
 {\bfseries Keywords}: square of graphs, hamiltonian cycles

 {\bfseries 2010 Mathematics Subject Classification:} 05C38,05C48
\end{abstract}

\section {Introduction}
As for standard terminology, we refer to the book by Bondy and Murty, \cite{Bon}, and to the papers quoted in the references.

The {\em square} of a graph $G$, denoted $G^2$, is the graph obtained from $G$ by joining any two nonadjacent vertices which have a common neighbor,  by an edge. Fairly recent development in hamiltonian graph theory has shown a resurgence of interest in hamiltonian cycles and paths in the square of 2-connected graphs (which we call 2-blocks for short). In particular, short proofs have been found for two results of the second author of the present paper, \cite{Geo}, \cite{Mut}. And more recently, in \cite{Als} the authors develop algorithms which are linear in $|E(G)|$ and produce a hamiltonian cycle, a hamiltonian path joining arbitrary vertices $u$ and $v$ respectively, in $G^2$. Moreover, they develop an algorithm running in $O(|V(G)|^2)$ time and producing cycles of arbitrary length from 3 to $|V(G)|$. 

Also very recently it was shown in \cite{FleEk} and \cite{FleChia} that a 2-block has the          $\mathcal{F}_{4}$ property; that is, given vertices $x_1,x_2,x_3,x_4$ in the 2-block $G$, there is a hamiltonian path in $G^2$ joining $x_1$ and $x_2$ and traversing distinct edges $x_3y_3$ and $x_4y_4$ of $G$ (see Theorem \ref{F_4}). The proof of this result is very long and is based on techniques developed by Fleischner in \cite{Fle}, \cite{Fle1}, \cite{Fle2} and by Fleischner and Hobbs in \cite{FleHob}. It remains to be shown whether one can find a much shorter proof of this result. However, this result will be of importance in the proof of the main result of the current paper.

We start with a definition.

\begin{definition}
 A graph $G$ is said to have the $\mathcal{H}_{k}$ property if for any given vertices $x_1,...,x_k$  
 there is a hamiltonian cycle in $G^2$ containing distinct edges $x_1y_1,...,x_ky_k$ of $G$.
\end{definition}

We note in passing that $G$ having the $\mathcal{F}_{4}$ property implies that $G$ has the 
$\mathcal{H}_{3}$ property; clearly, choose $x_1$, $x_2$, $x_3$ arbitrarily and a different $x_4$ adjacent to some $x_1$ for $i\in\{1,2,3\}$ in $G$, say $i=1$. A hamiltonian path in $G^2$ joining $x_1$ and $x_4$ and containing edges $x_2y_2$ and $x_3y_3$ of $G$ yields a hamiltonian cycle containing these two edges of $G$ and $x_1x_4$ which lies also in $G$.

The main result of this paper is the following.

\begin{theorem}
 \label{maintheorem}
 Given a 2-block $G$ on at least four vertices, then $G$ has the $\mathcal{H}_{4}$ property, and  
 there are 2-blocks of arbitrary order greater than 4 without the $\mathcal{H}_{5}$ property.
\end{theorem}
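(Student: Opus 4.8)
The theorem has two parts: the positive statement that every 2-block on $\geq 4$ vertices has the $\mathcal{H}_4$ property, and the negative statement that there exist 2-blocks of every order $>4$ lacking the $\mathcal{H}_5$ property.

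For the positive direction, the strategy is to reduce $\mathcal{H}_4$ to the $\mathcal{F}_4$ property supplied by Theorem \ref{F_4}. We're given four distinct vertices $x_1,\dots,x_4$ in $G$ and want a hamiltonian cycle in $G^2$ through distinct edges $x_iy_i \in E(G)$. The natural idea mimics the remark showing $\mathcal{F}_4 \Rightarrow \mathcal{H}_3$: we'd like to "spend" one of the four prescribed edges as the closing edge of a hamiltonian path. But unlike the $\mathcal{H}_3$ case, here all four vertices $x_1,\dots,x_4$ already carry demands, so we cannot simply pick a fresh fifth vertex. Instead I would try to find a vertex $w$ adjacent in $G$ to one of the $x_i$, say adjacent to $x_1$, with $w \notin \{x_2,x_3,x_4\}$ (such $w$ exists unless $\{x_2,x_3,x_4\}$ contains all neighbors of every $x_i$, a situation one handles separately using 2-connectedness and $p \geq 4$). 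Then apply $\mathcal{F}_4$ to the quadruple $(x_1, w; x_2, x_3)$ — wait, that only pins down $x_2,x_3$. The cleaner route: apply the $\mathcal{F}_4$ property with endpoints $x_4$ and $w$, where $w$ is a $G$-neighbor of $x_4$ distinct from $x_1,x_2,x_3$, and with prescribed edges forced at $x_1$ and $x_2$; this yields a hamiltonian $x_4$–$w$ path in $G^2$ through edges $x_1y_1, x_2y_2$, and adding the $G$-edge $x_4w$ closes it to a hamiltonian cycle through $x_1y_1, x_2y_2, x_4w$. This only gives three prescribed edges, so one needs to be more careful: the endpoints of the $\mathcal{F}_4$-path should themselves be chosen among $\{x_1,\dots,x_4\}$ so that the closing $G$-edge covers two of the demands at once. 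Concretely, if some $x_i x_j \in E(G)$, apply $\mathcal{F}_4$ with endpoints $x_i, x_j$ and prescribed edges at the other two; the closing edge $x_i x_j$ handles both remaining demands. The residual case is that $\{x_1,\dots,x_4\}$ is independent in $G$ — then I would instead route through a neighbor: pick $w$ adjacent to $x_1$ in $G$, apply $\mathcal{F}_4$ with endpoints $x_1$ and $w$ and prescribed edges at $x_2, x_3$; the hamiltonian path closes via $x_1w$ into a cycle covering $x_1w, x_2y_2, x_3y_3$, and the point is to simultaneously arrange that $x_4$'s demand is met — for this I would choose $w$ to be a common neighbor of $x_1$ and $x_4$ in $G$ if one exists (so that near $x_4$ the path uses edge $x_4w$... no, $w$ is an endpoint). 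The honest obstruction is getting the fourth edge for free, and I expect the real proof uses a slightly stronger extraction: choosing the $\mathcal{F}_4$ endpoints as an edge $x_1 w$ with $w$ a $G$-neighbor of $x_1$, prescribing edges at $x_2,x_3,x_4$ — but that is an $\mathcal{F}_5$-type demand. So the genuine argument probably picks two of the $x_i$ that are adjacent in $G$, or introduces a carefully chosen auxiliary vertex and invokes $\mathcal{F}_4$ with a case analysis on the adjacencies among $x_1,\dots,x_4$; this case analysis is the main obstacle.

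For the negative direction, the task is to exhibit, for each $n > 4$, a 2-block $G$ of order $n$ and five vertices $x_1,\dots,x_5$ such that no hamiltonian cycle of $G^2$ uses five distinct $G$-edges $x_iy_i$. The standard construction is to take a small "rigid" gadget and pad it to arbitrary order while keeping 2-connectedness. I would build $G$ from a central structure — e.g. take three (or more) internally disjoint paths joined at two endpoints $u, v$ (a generalized theta graph), or a "spider"/"book"-like graph — and choose $x_1,\dots,x_5$ to be cut-vertex-like vertices of degree $2$ in $G$. The key counting fact: in $G^2$, a hamiltonian cycle at a vertex $x$ of $G$-degree $2$ with neighbors $a,b$ can use at most... actually it has exactly two incident cycle-edges, and the $G$-edges available at $x$ are $xa, xb$ only; so demanding a $G$-edge at $x$ forces the cycle to enter/leave $x$ along $xa$ or $xb$. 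If we have five such degree-2 vertices arranged so that the forced edges overconstrain the cycle — for instance, five degree-2 vertices all sharing the same two neighbors $u,v$ (i.e. $G$ = five internally-disjoint $u$–$v$ paths of length $2$, then subdivided or padded to reach order $n$) — then each $x_i$ forces a cycle-edge into $\{x_iu, x_iv\}$, and a parity/degree argument at $u$ and $v$ shows five such forced incidences cannot coexist in a single hamiltonian cycle. Making this work for \emph{every} order $n > 4$ requires inserting extra vertices without destroying the rigidity, which is the technical heart of this part.

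Overall, I expect the positive part's adjacency case analysis — squeezing a fourth prescribed edge out of the three-edge-plus-closing-edge mechanism of $\mathcal{F}_4$ — to be the main obstacle, with the negative part being a more routine (if fiddly) explicit construction and counting argument.
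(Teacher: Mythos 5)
Your negative direction is essentially the paper's: the paper takes an arbitrary 2-block, fixes $x_1,x_2$, attaches $t\geq 3$ new vertices $y_1,\dots,y_t$ each joined exactly to $x_1$ and $x_2$, and prescribes $\{x_1,x_2,y_1,y_2,y_3\}$; since every $G$-edge at a $y_j$ ends in $\{x_1,x_2\}$, pigeonhole forces three cycle-edges of $G$ at $x_1$ or $x_2$, which is impossible. Your ``five common neighbours of $u,v$'' gadget is the same counting argument, and padding to arbitrary order is handled in the paper simply by letting the base 2-block be arbitrary. That half is fine.

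The positive direction, however, has a genuine gap, and you have correctly located it yourself: the reduction of $\mathcal{H}_4$ to $\mathcal{F}_4$ via a closing edge cannot work. Even in your favourable case where some $x_ix_j\in E(G)$, the closing edge $x_ix_j$ is a \emph{single} edge and the definition of $\mathcal{H}_4$ demands four pairwise \emph{distinct} edges $x_1y_1,\dots,x_4y_4$; so $x_ix_j$ can discharge the demand at only one of $x_i,x_j$, and Theorem \ref{F_4} gives no control over whether the path's terminal edge at the other endpoint lies in $G$. When $\{x_1,\dots,x_4\}$ is independent the approach yields at most three prescribed edges. The paper does not argue this way at all: it runs a minimal-counterexample induction on $|V(G)|+|E(G)|$, reducing to an edge-critical block, then splits into the case $D(G)\neq\emptyset$ (delete an edge $f=x'x$ with both ends of degree $\geq 3$, decompose $G-f$ into a blockchain, and stitch together hamiltonian paths/cycles of the pieces using Theorems \ref{F_4}, \ref{strongF_3}, \ref{2-blockcycle}, \ref{blockchaincycle} and Lemma \ref{blockchainpath}, plus a path-replacement argument in the style of \cite{Fle1}) and the DT-graph case $D(G)=\emptyset$ (augment $\{x_1,\dots,x_4\}$ to a five-set $W$, take a $W$-sound cycle, and invoke the EPS-graph machinery of Theorem \ref{The2}). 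None of this structure is present or foreshadowed in your plan, so the core of the theorem remains unproved.
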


This theorem and the $\mathcal{F}_{4}$ property of 2-blocks are key to describe the most general block-cut vertex structure a graph $G$ may have in order to guarantee that $G^2$ is hamiltonian, hamiltonian connected, respectively. This will be done in follow-up papers.

Moreover Theorem \ref{maintheorem} gives the positive answer to Conjecture 5.4 stated in~\cite{Ekst} as an immediate corollary.

\begin{corollary}
 Let $G$ be a connected graph such that its block-cutvertex graph $\mbox{bc}(G)$ is homeomorphic to a star in which the center $c$ corresponds to a block $B_c$ of $G$. If $B_c$ contains at most 4 cutvertices, then $G^2$ is hamiltonian.
\end{corollary}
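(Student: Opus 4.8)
The plan is to deduce the Corollary from Theorem~\ref{maintheorem} applied to the central block $B_c$, together with a routine ``block-chain'' analysis of the arms of the star. Since $\mathrm{bc}(G)$ is homeomorphic to a star whose centre is the block node $B_c$, the cutvertices of $G$ that lie in $B_c$ are pairwise distinct vertices $x_1,\dots,x_t$ with $t\le4$, and $G=B_c\cup A_1\cup\dots\cup A_t$, where the \emph{arm} $A_j$ is the subgraph of $G$ induced by $x_j$ together with all vertices reachable from $x_j$ without meeting $V(B_c)\setminus\{x_j\}$. The arms are pairwise disjoint and meet $B_c$ only in their respective $x_j$; each $A_j$ is connected, its block-cutvertex tree is a path, and $x_j$ is a non-cutvertex lying in an end block of $A_j$. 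First I would record two elementary facts: $G^2[V(A_j)]=A_j^{\,2}$, and a vertex of $V(A_j)\setminus\{x_j\}$ is adjacent in $G^2$ to a vertex of $V(B_c)\setminus\{x_j\}$ only via the common neighbour $x_j$, hence only if it lies in $N_G(x_j)$.

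Step 1 handles the centre. Padding $x_1,\dots,x_t$ by arbitrary further distinct vertices of $B_c$ to a list of length $4$ (possible when $|V(B_c)|\ge4$; when $|V(B_c)|=3$ one argues directly with $B_c=B_c^{\,2}=K_3$), Theorem~\ref{maintheorem} yields a Hamiltonian cycle $C$ of $B_c^{\,2}$ containing pairwise distinct edges $x_jy_j\in E(B_c)$, $j=1,\dots,t$, with $y_j\in N_{B_c}(x_j)$. Step 2 is an ``arm lemma'': for each $j$ there is a Hamiltonian path $P_j$ of $A_j^{\,2}$ from $x_j$ to some $u_j\in N_{A_j}(x_j)$. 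I would prove this by induction on the number of blocks of $A_j$. If $A_j$ is a single block $B$: when $B=K_2$ take $P_j$ to be its edge; when $B$ is a $2$-block, delete from a Hamiltonian cycle of $B^2$ an edge of $B$ incident to $x_j$, such a cycle existing by the $\mathcal{H}_1$-property, which is immediate from Theorem~\ref{maintheorem} when $|V(B)|\ge4$ and trivial when $B=K_3$. For the inductive step, split off at its attaching cutvertex $c$ the end block $B_0$ containing $x_j$; the remainder $A_j^-$ is again an arm, rooted at $c$, so by induction $(A_j^-)^2$ has a Hamiltonian path $\sigma$ from $c$ to some $u^-\in N_{A_j^-}(c)$. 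If $B_0=K_2=\{x_j,c\}$ one checks directly that $x_j$, followed by $u^-$ and then by $\sigma$ reversed, is the required path (using $x_ju^-\in E(G^2)$ via the common neighbour $c$). If $B_0$ is a $2$-block, use its $\mathcal{H}_2$-property (from Theorem~\ref{maintheorem}, or alternatively the $\mathcal{F}_4$-property of Theorem~\ref{F_4}) to find a Hamiltonian path $\pi$ of $B_0^{\,2}$ from $x_j$ to some $v^*\in N_{B_0}(x_j)$ that traverses an edge $cc''$ of $B_0$ at $c$; replacing $cc''$ in $\pi$ by the detour through $A_j^-$ coming from $\sigma$ (legitimate because $u^-c''\in E(G^2)$ via $c$) absorbs all of $V(A_j^-)\setminus\{c\}$ and gives $P_j$, after a short case check according to whether $v^*=c$.

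Step 3 assembles the cycle. Process $j=1,\dots,t$ in turn: in the current Hamiltonian cycle of $G^2$ restricted to $V(B_c)\cup V(A_1)\cup\dots\cup V(A_{j-1})$ — which still contains the edge $x_jy_j$ — replace $x_jy_j$ by the path consisting of $P_j$ from $x_j$ to $u_j$ followed by the edge $u_jy_j$. This is legitimate because $u_jy_j\in E(G^2)$ (common neighbour $x_j$) and the interior of $P_j$ lies in $V(A_j)\setminus\{x_j\}$, disjoint from everything used so far; the result is again a Hamiltonian cycle, now also spanning $V(A_j)$. Because this step deletes only $x_jy_j$, which lies inside $V(B_c)$ and is distinct from every other $x_{j'}y_{j'}$, and adds only edges incident to $V(A_j)\setminus\{x_j\}$, the remaining edges $x_{j'}y_{j'}$ survive, so the induction on $j$ goes through; after $t$ steps one obtains a Hamiltonian cycle of $G^2$.

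I expect the only real difficulty to be the inductive step of the arm lemma: arranging a Hamiltonian traversal of the end block $B_0$ that simultaneously starts at $x_j$, ends at an $A_j$-neighbour of $x_j$, and meets the attaching cutvertex $c$ consecutively with one of its $B_0$-neighbours, so that the recursively obtained path through $A_j^-$ can be grafted in. Pendant ($K_2$) blocks — occurring either as the end block or immediately after it — are the degenerate cases, since then a cutvertex has only one neighbour inside its block and the natural grafting point disappears; these must be dispatched by hand. All of this block-chain machinery is, in substance, already available in~\cite{Ekst}, where the present statement appears as Conjecture~5.4; granting it, the Corollary is indeed an immediate consequence of Theorem~\ref{maintheorem}.
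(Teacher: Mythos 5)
The paper offers no written proof of this corollary---it is merely asserted to follow from Theorem~\ref{maintheorem} as the answer to Conjecture~5.4 of~\cite{Ekst}---so there is nothing to compare line by line; your write-up supplies the intended derivation, and it is correct. Two remarks. First, your inductive ``arm lemma'' in Step~2 is more work than necessary: each arm $A_j$ is a blockchain in which $x_j$ is an inner vertex of an endblock, so Theorem~\ref{blockchaincycle} (already quoted in the paper) yields a hamiltonian cycle of $A_j^2$ having at least one of its edges at $x_j$ in $A_j$; deleting that edge gives precisely the hamiltonian path from $x_j$ to some $u_j\in N_{A_j}(x_j)$ that you need, while the single-block case is Theorem~\ref{2-blockcycle} (and $K_2$ is trivial). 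With that, Steps~1 and~3---pad the at most four cutvertices of $B_c$ to a set of four, apply the $\mathcal{H}_4$ property to get a hamiltonian cycle of $B_c^2$ through distinct edges $x_jy_j\in E(B_c)$, and splice each arm into $x_jy_j$ via the $G^2$-edge $u_jy_j$ obtained from the common neighbour $x_j$---constitute the entire argument, which is presumably why the authors call the corollary immediate. Second, you should also dispatch the degenerate situations $|V(B_c)|=2$ (then $B_c=K_2$, there are at most two arms, $G$ is a blockchain, and Theorem~\ref{blockchaincycle} applies directly) and $t\le 1$, alongside your $K_3$ case; these are trivial, but your padding step as written presupposes $|V(B_c)|\ge 4$.
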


\section{Preliminaries}
However, before proving Theorem \ref{maintheorem} we mention several concepts and results which we need to make use of, and we prove a lemma.

A graph $G$ is an edge-critical block, if $\kappa(G)=2$ and $\kappa(G-e)=1$ for any edge $e$ of $G$ . Let $D(G)$ be the set of edges $uv$ where $d_{G}(u), d_{G}(v)\geq 3$. If $D(G)=\emptyset$, then every edge of $G$ is incident to a vertex of degree 2; we call such a graph a \emph{DT-graph}.

\begin{theorem}\emph{\textbf{\cite{Fle1}}}
 \label{The1}
 Let $G$ be an edge-critical block. Then exactly one of the following two statements is true:
 \begin{itemize}
  \item[1)] $G$ is a DT-block.
  \item[2)] There is an edge $f$ in $D(G)$ such that at least one of the endblocks of $G-f$ is a DT-block.
 \end{itemize}
\end{theorem}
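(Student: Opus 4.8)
The plan is to dispose of the word ``exactly'' first and then prove the substantive direction by an extremal argument. For mutual exclusivity: if $G$ is a DT-block then $D(G)=\emptyset$, so statement~2) is vacuously false; conversely, if statement~2) holds then $D(G)\neq\emptyset$, so $G$ is not a DT-graph, hence not a DT-block. Thus at most one of 1), 2) is true, and it remains to show that at least one is. So assume from now on that $G$ is not a DT-block, i.e.\ $D(G)\neq\emptyset$, and the task is to produce an edge $f\in D(G)$ such that some endblock of $G-f$ is a DT-block.

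The first ingredient I would establish is a structural observation about $G-f$ for any $f=uv\in D(G)$. Since $G=(G-f)+f$ is $2$-connected while $\kappa(G-f)=1$, every cut vertex $c$ of $G-f$ must separate $u$ from $v$ in $G-f$ (otherwise $c$ would survive as a cut vertex of $G$). From this I would deduce, in turn, that $u$ and $v$ are not cut vertices of $G-f$, that the block--cut tree of $G-f$ is a path, and that $G-f$ has exactly two endblocks $B_u\ni u$ and $B_v\ni v$ with $B_u\neq B_v$ (if $u,v$ lay in a common endblock, its unique cut vertex would fail to separate them). Writing $B=B_u$ and letting $b$ be the unique cut vertex of $G-f$ lying in $B$, it then follows that $u\neq b$, that $B=G[V(B)]$, that $f$ is the only edge of $G$ joining $V(B)\setminus\{b\}$ to $V(G)\setminus V(B)$, and hence that $\{b,u\}$ is a $2$-cut of $G$ with $B$ sitting on one side. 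If $|V(B)|=2$ then $B=K_2$ is already a DT-block and we are done, so we may assume $|V(B)|\ge 3$.

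For the main argument I would choose, among all $f\in D(G)$ and all endblocks $B$ of $G-f$, a pair $(f,B)$ with $|V(B)|$ minimum; by the above we may assume $|V(B)|\ge 3$. Suppose for contradiction that $B$ is not a DT-block and pick an edge $e=xy\in E(B)$ with $d_B(x),d_B(y)\ge 3$. Since $B$ is a subgraph of $G$ we get $d_G(x)\ge d_B(x)\ge 3$ and likewise for $y$, so $e\in D(G)$. Applying the structural observation to $e$, the graph $G-e$ has exactly two distinct endblocks, one containing $x$ and one containing $y$. First I would show that $B-e$ cannot be $2$-connected: otherwise $B-e$ would lie in a single block of $G-e$, forcing $x$ and $y$ into the \emph{same} endblock of $G-e$, a contradiction. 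Hence $B-e$ has a cut vertex; its block--cut tree is a path from an endblock $\beta_x\ni x$ to an endblock $\beta_y\ni y$, and both have fewer than $|V(B)|$ vertices. The conclusion I am aiming for is that, after possibly re-choosing $e$ in $D(B)$ or choosing which of $x,y$ to track, one of $\beta_x,\beta_y$ is actually an endblock of $G-e$ too, contradicting the minimality of $|V(B)|$.

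The hard part is precisely this last step. The danger is that the block of $G-e$ containing $x$ is strictly larger than $\beta_x$, which happens exactly when $x$ lies on some $b$--$u$ path inside $B-e$, so that $x$ becomes $2$-connected in $G-e$ to vertices outside $V(B)$ through the $2$-cut $\{b,u\}$ together with the chord $f=uv$. Ruling this out demands a careful analysis of where $b$ and $u$ sit on the block--cut path of $B-e$ relative to the endblocks $\beta_x$ and $\beta_y$, exploiting the freedom to move $e$ within $D(B)$ or to swap the roles of $x$ and $y$. One also has to treat separately the cases in which an endpoint of $e$ coincides with $b$ or with $u$; there the inequality $d_G>d_B$ at that vertex is strict, which helps but must be tracked. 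I expect this local case analysis around the $2$-cut to constitute the bulk of the proof, the remainder being the two short observations above.
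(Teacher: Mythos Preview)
The paper does not prove this theorem at all: it is quoted verbatim from Fleischner's 1974 paper \cite{Fle1} and used as a black box. There is therefore no ``paper's own proof'' to compare your attempt against.

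On the merits of your sketch: the mutual--exclusivity paragraph and the structural observation about $G-f$ being a blockchain with $u,v$ in distinct endblocks are correct and standard. (A small remark: the case $|V(B)|=2$ cannot occur, since $d_G(u)\ge 3$ forces $d_{G-f}(u)\ge 2$, so $B$ is never $K_2$; your sentence there is harmless but vacuous.) Your derivation that $B-e$ is not $2$-connected is also fine; in fact the same argument shows $B$ is itself an edge--critical block, which is the natural inductive handle.

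The genuine gap is exactly where you flag it. You reduce everything to showing that, for a well-chosen $e\in D(B)$, one of the two endblocks $\beta_x,\beta_y$ of $B-e$ coincides with an endblock of $G-e$, but you do not carry this out. The obstruction is real: $\beta_x$ fails to be an endblock of $G-e$ precisely when one of the attachment vertices $b,u$ lies in $\beta_x\setminus\{c_x\}$, and symmetrically for $\beta_y$; the only problematic configuration is when $b$ and $u$ fall into \emph{different} endblocks of $B-e$ as non-cut-vertices. Your proposal stops at ``I expect this local case analysis \ldots\ to constitute the bulk of the proof''. That is an honest assessment, but it means what you have submitted is a plan rather than a proof: the step that carries all the weight is missing, and neither the freedom to re-choose $e$ nor to swap $x\leftrightarrow y$ resolves the bad configuration without a further structural argument. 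If you want to complete this route, the cleanest way is to exploit that $B$ is edge--critical and argue by induction on $|V(G)|$ (or on $|V(B)|$) rather than a bare minimality choice, tracking where $b$ and $u$ sit relative to the blockchain $B-e$; alternatively, consult Fleischner's original argument in \cite{Fle1}.
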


The basic result about hamiltonicity of the square of a 2-block is given by the following theorem.

\begin{theorem}\emph{\textbf{\cite{Fle2}}}
\label{2-blockcycle}
Suppose $v$ and $w$ are two arbitrarily chosen vertices of a $2$-block $G$.
Then $G^2$ contains a hamiltonian cycle $C$ such that the edges of $C$ incident to $v$ are in $G$ 
and at least one of the edges of $C$ incident to $w$ is in $G$. Furthermore, if $v$ and $w$ are 
adjacent in $G$, then these are three different edges.  
\end{theorem}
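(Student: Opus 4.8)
The plan is to prove the statement by induction on the number of edges of $G$, using Theorem~\ref{The1} to control the edge-critical case. First I would reduce to the case in which $G$ is an edge-critical block: if $G$ is not edge-critical there is an edge $e$ with $\kappa(G-e)=2$, which I would take distinct from $vw$ whenever $vw\in E(G)$ and this is possible, and applying the induction hypothesis to $G-e$ with the same $v,w$ yields a hamiltonian cycle of $(G-e)^2\subseteq G^2$ all of whose relevant edges lie in $E(G-e)\subseteq E(G)$. The only point needing care is the ``three different edges'' clause when $vw\in E(G)$ and the sole removable edge is $vw$ itself; there I would apply the hypothesis to $G-vw$ with $v,w$ nonadjacent and note that the edge it produces at $w$ cannot be $vw$, hence is automatically distinct from the two edges it produces at $v$.

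So assume $G$ is edge-critical. By Theorem~\ref{The1} either $G$ is a DT-block, or there is an edge $f\in D(G)$ such that some endblock of $G-f$ is a DT-block. In the latter case I would delete $f$, carry the induction over the block-cut structure of $G-f$ with the DT-block endblock as anchor, and re-add $f$ at the end to restore 2-connectivity. The DT-block case is the core. If $G$ is a cycle, then $G$ itself is the required hamiltonian cycle (when $vw\in E(G)$, take the edge of $G$ at $w$ other than $vw$). Otherwise the vertices of degree at least~$3$ form an independent set and $G$ is a 2-connected multigraph $H$ of minimum degree at least~$3$ with every edge subdivided at least once; I would induct on $H$, deleting an edge $\epsilon$ of $H$ --- that is, the internal vertices of the corresponding maximal degree-$2$ path $P=p_0p_1\cdots p_k$ of $G$ --- and applying the hypothesis to the smaller graph that remains. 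Then I would reinsert $p_1,\dots,p_{k-1}$: since the square of a path admits a hamiltonian path between its two ends and also between an end and the adjacent vertex, an edge of the smaller hamiltonian cycle incident to $p_0$ (together, if needed, with one incident to $p_k$) can be replaced by a path through $p_1,\dots,p_{k-1}$ using only edges of $G$ and of $G^2$. I would run this splice inside an arc disjoint from $v$ and $w$, or, if $v$ or $w$ is an attachment vertex, arrange the recursive hypothesis so as to leave a suitable $G$-edge available there; in either case the prescribed edges at $v$ and $w$ survive.

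I expect the main obstacle to be twofold. First, $H-\epsilon$ need not be 2-connected, so the induction in the DT-block case cannot stay inside the class of 2-blocks: one must formulate a stronger statement over a suitable larger class of graphs (with controlled block structure) and choose $\epsilon$ so that the recursion remains inside it, and it is exactly the structure theory embodied in Theorem~\ref{The1} and its companions that makes such a choice possible. Second, through every reinsertion and through the endblock surgery one must propagate precise information about which edges of the smaller hamiltonian cycle at the attachment vertices lie in $G$, compatibly with the still-open requirements at $v$ and $w$ and with the ``three different edges'' clause. Finding an inductive statement strong enough to carry all of this, and verifying it case by case, is what makes the full argument long; the cycle surgery by itself is routine.
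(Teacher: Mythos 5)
This statement is quoted from \cite{Fle2}; the present paper offers no proof of it, so there is no internal proof to compare your argument against. Your outline does reproduce the skeleton of Fleischner's original induction --- reduction to an edge-critical block, the dichotomy of Theorem~\ref{The1}, separate treatment of DT-blocks --- and the easy steps are sound (in particular the reduction to the edge-critical case, including your check that the three edges stay distinct when the only removable edge is $vw$: since both cycle-edges at $v$ must lie in $E(G-vw)$, neither can be the pair $vw$, so the $G$-edge at $w$ is automatically a third edge). But the two places you yourself flag as ``obstacles'' are not bookkeeping to be propagated through the induction; they are the entire content of the theorem, and the sketch contains no idea that overcomes them.

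Concretely, in the DT-block case the delete-and-splice induction on the underlying multigraph $H$ fails for a structural reason you do not address: a branch vertex $b$ of degree $d\ge 3$ has only $d$ incident $G$-edges while a hamiltonian cycle of $G^2$ uses exactly two edges at $b$, and when several subdivided edges of $H$ meet at $b$, each reinsertion wants to consume a $G$-edge at $b$ as an anchor. No local choice of $\epsilon$ guarantees that enough anchors survive simultaneously at every branch vertex while also reserving both $G$-edges at $v$ and one at $w$; ``arrange the recursive hypothesis so as to leave a suitable $G$-edge available there'' is exactly the unresolved crux. This is why the actual proof abandons edge-by-edge induction and instead builds the hamiltonian cycle globally from an EPS-graph $S=E\cup P$ with $d_P(u)\le 1$ at the distinguished vertices --- the machinery that appears in this paper as Theorem~\ref{The2} together with the algorithm of \cite{Fle}. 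Likewise, in the non-DT edge-critical case, ``re-add $f$ at the end'' hides all of the work: one must analyse how the inductively obtained cycle traverses the attachment path and perform surgery case by case (the thirteen cases of \cite{Fle1}, five of which are reworked in Subcase~1.3 of the present paper). Without a precise strengthened inductive statement and these surgeries, what you have is a plan for a proof rather than a proof.
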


Let $\mbox{bc}(G)$ denote the block-cutvertex graph of $G$. Blocks corresponding to leaves of $\mbox{bc}(G)$ are called \emph{endblocks}. Note that a block in a graph $G$ is either a 2-block or a bridge of $G$. The graph $G$ is called \emph{blockchain} if $\mbox{bc}(G)$ is a path. Let $G$ be a blockchain. We denote its blocks $B_{1}, B_{2},...,B_{k}$ and cutvertices $c_{1},c_{2},...,c_{k-1}$ such that $c_{i}\in V(B_{i})\cap V(B_{i+1})$, for $i=1,2,...,k-1$. A blockchain $G$ is called \emph{trivial}, if $E(\mbox{bc}(G))=\emptyset$, otherwise it is called  \emph{non-trivial}. Note that only $B_{1}$ and $B_{k}$ are endblocks of a non-trivial blockchain $G$. \emph{An inner block} is a block of $G$ containing exactly 2 cutvertices. \emph{An inner vertex} is a vertex in $G$ which is not a cutvertex of $G$.

The first author proved in \cite{Ekst} the following theorem dealing hamiltonicity of the square of a blockchain graph.

\begin{theorem}\emph{\textbf{\cite{Ekst}}}
 \label{blockchaincycle}
 Let $G$ be a blockchain and let $u_{1}$, $u_{2}$ be arbitrary inner vertices which are contained in different endblocks of~$G$.\\
 Then $G^{2}$ contains a hamiltonian cycle $C$ such that, for $i=1,2$,
  \begin{mathitem}
     \item[$\bullet$] if $u_{i}$ is contained in a 2-block, then both edges of $C$ incident with 
                      $u_{i}$ are in $G$, and   
     \item[$\bullet$] if $u_{i}$ is not contained in a 2-block, then exactly one edge of $C$ 
                      incident with $u_{i}$ is in $G$.
  \end{mathitem}
\end{theorem}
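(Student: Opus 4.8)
The plan is to induct on the number $k$ of blocks of $G$. Since $u_1$ and $u_2$ must lie in two \emph{different} endblocks, necessarily $k\ge 2$; write the blocks as $B_1,\dots,B_k$ with cutvertices $c_1,\dots,c_{k-1}$ and, after relabelling, $u_1\in V(B_1)$, $u_2\in V(B_k)$, neither a cutvertex. Throughout, the gluing device is the trivial observation that if $a$ and $b$ are $G$-neighbours of a cutvertex $c$ lying in two different blocks, then $ab\in E(G^2)$; this lets one splice hamiltonian cycles (or paths) of the squares of consecutive blocks across their common cutvertex.

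For the base case $k=2$, write $G=B_1\cup B_2$ with cutvertex $c_1$. Whenever an endblock $B_i$ is a $2$-block I would apply Theorem~\ref{2-blockcycle} to $B_i$ with $v:=u_i$ and $w:=c_1$, getting a hamiltonian cycle $C_i$ of $B_i^2$ both of whose edges at $u_i$ lie in $G$ and which contains an edge $c_1a_i\in E(G)$ at $c_1$; here $a_i\ne u_i$, by the ``three different edges'' clause when $u_i\sim c_1$ and automatically otherwise (then no edge of $C_i$ at $c_1$ can meet $u_i$). If both endblocks are $2$-blocks, delete $c_1a_i$ from $C_i$, join the two resulting hamiltonian paths at their common endpoint $c_1$, and close with $a_1a_2\in E(G^2)$; since $a_i\ne u_i$ the edges at $u_1,u_2$ are untouched. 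If exactly one endblock, say $B_2$, is a bridge, then $B_2=\{c_1,u_2\}$ and I would instead replace the edge $c_1a_1$ of $C_1$ by the path $c_1\,u_2\,a_1$ (legal since $u_2a_1\in E(G^2)$), so that the cycle meets $u_2$ in the $G$-edge $c_1u_2$ and in a non-$G$-edge, exactly as wanted for a vertex outside every $2$-block. If both endblocks are bridges, $G$ is a path on three vertices and $G^2=K_3$, whose unique hamiltonian cycle does the job.

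For the inductive step $k\ge 3$, set $G':=B_1\cup\dots\cup B_{k-1}$. Then $G'$ is again a blockchain with endblocks $B_1$ and $B_{k-1}$, and the former cutvertex $c_{k-1}$ has become an inner vertex of the endblock $B_{k-1}$ of $G'$. I would apply the induction hypothesis to $G'$ with the inner vertices $u_1\in B_1$ and $c_{k-1}\in B_{k-1}$, obtaining a hamiltonian cycle $C'$ of $(G')^2$ with the prescribed behaviour at $u_1$ and with at least one edge $c_{k-1}a\in E(G')$ at $c_{k-1}$ (where $a\in V(B_{k-1})$). If $B_k$ is a bridge, $B_k=\{c_{k-1},u_2\}$, I would replace the edge $c_{k-1}a$ of $C'$ by the path $c_{k-1}\,u_2\,a$, using $u_2a\in E(G^2)$; the new cycle meets $u_2$ in one $G$-edge and one non-$G$-edge. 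If $B_k$ is a $2$-block, I would apply Theorem~\ref{2-blockcycle} to $B_k$ with $v:=u_2$, $w:=c_{k-1}$ to get a hamiltonian cycle $C_k$ of $B_k^2$ with both edges at $u_2$ in $G$ and an edge $c_{k-1}b\in E(G)$, $b\ne u_2$; then delete $c_{k-1}a$ from $C'$ and $c_{k-1}b$ from $C_k$, join the two hamiltonian paths at their common endpoint $c_{k-1}$, and close with $ab\in E(G^2)$. Either way one gets a hamiltonian cycle $C$ of $G^2$; its edges at $u_2$ are as required by construction, and its edges at $u_1$ coincide with those of $C'$ because the surgery only touches edges incident with $c_{k-1}$ or $a$, while $u_1\ne c_{k-1}$ and $u_1\ne a$ (as $u_1$ is a non-cutvertex of $B_1$ and $k\ge 3$, so $B_1$ meets $B_{k-1}\cup B_k$ only, if at all, in a cutvertex).

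The step I expect to be the main obstacle is keeping the two distinguished vertices clear of the splicing. For $u_1$ this is handled structurally by always peeling the endblock \emph{not} containing $u_1$, so $u_1$ sits in a block disjoint from everything the surgery touches. For $u_2$ it is precisely the reason one needs the strengthened form of Theorem~\ref{2-blockcycle}: when re-attaching a $2$-block one must choose the edge of its hamiltonian cycle at the splicing cutvertex so as to avoid $u_2$, and ``three different edges when $v\sim w$'' guarantees this is possible. Beyond that, what remains is the full case analysis according to whether each of the two ``active'' endblocks is a $2$-block or a bridge, together with the small cases $|V(B)|=3$; these are routine but need to be carried out.
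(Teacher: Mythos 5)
This statement is imported by the paper from \cite{Ekst} and is not proved in the text, so there is no in-paper argument to compare against; your proof therefore has to stand on its own, and it does. The induction on the number of blocks, always peeling off the endblock away from $u_1$, splicing consecutive pieces through a cutvertex $c$ by using the $G^2$-edge between two $G$-neighbours of $c$ lying in different blocks, and invoking the ``three different edges'' clause of Theorem \ref{2-blockcycle} to keep the splicing edge at $c$ away from the distinguished vertex, is exactly the standard derivation of this result from Fleischner's theorem, and is in the spirit of how it is obtained in \cite{Ekst}. Two small points you use implicitly and should make explicit in a written version: first, $c_{k-1}\neq c_{k-2}$ (this follows from $\mbox{bc}(G)$ being a path, since otherwise $B_{k-1}$ would be a leaf of $\mbox{bc}(G)$), so $c_{k-1}$ really is an inner vertex of the endblock $B_{k-1}$ of $G'$ and the induction hypothesis applies to it; second, in the non-adjacent case the reason $a_i\neq u_i$ is not that ``no edge of $C_i$ at $c_1$ can meet $u_i$'' (an edge of $C_i\subseteq B_i^2$ joining $c_1$ and $u_i$ could exist) but that the guaranteed edge $c_1a_i$ lies in $E(G)$, while $c_1u_i\notin E(G)$. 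With those clarifications the argument is complete.
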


Let $G$ be a connected graph. By a \emph{$uv$-path} we mean a path from $u$ to $v$ in $G$. If a $uv$-path is hamiltonian, we call it a \emph{$uv$-hamiltonian path}. Let $A=\{x_{1},x_{2},...,x_{k}\}$ be a set of $k$ ($\geq 3$) distinct vertices in $G$. An $x_{1}x_{2}$-hamiltonian path
in $G^{2}$ which contains $k-2$ distinct edges $x_{i}y_{i}\in E(G), i=3,...,k$, is said to be $\mathcal{F}_{k}$. A graph $G$ is said to have the $\mathcal{F}_{k}$ property if, for any set $A=\{x_{1},x_{2},...,x_{k}\}\subseteq V(G)$, there is an $\mathcal{F}_{k}$ $x_{1}x_{2}$-hamiltonian path in $G^{2}$.

\begin{theorem}\emph{\textbf{\cite{FleChia}}}
 \label{F_4}
  Let $G$ be a 2-block. Then $G$ has the $\mathcal{F}_{4}$ property. 
\end{theorem}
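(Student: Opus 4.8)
This is exactly the theorem of \cite{FleChia} (a variant appears in \cite{FleEk}); the following is a sketch of how one would prove it. The plan is an induction on $|V(G)|$ organised around the structure theorem for edge-critical blocks (Theorem~\ref{The1}), with EPS-subgraphs supplying the combinatorics in the base configuration.

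First I would reduce to the case that $G$ is an edge-critical block: since $G^2\subseteq H^2$ whenever $G\subseteq H$ on the same vertex set, one may delete any edge not needed as one of the two edges to be exhibited, as long as $2$-connectivity survives, and then argue that the two prescribed edges $x_3y_3,x_4y_4$ can be chosen compatibly with this minimisation. If the resulting edge-critical $G$ is a DT-block, I would invoke the refined EPS-subgraph results of \cite{Fle,Fle1,FleHob}: a $2$-connected DT-graph has a spanning connected subgraph $S=E\cup P$ with $E$ an even subgraph, $P$ a linear forest, and $E$ and $P$ edge-disjoint, and one can moreover prescribe the local behaviour of $S$ at boundedly many vertices --- that $x_1,x_2$ become endpoints of path-components of $P$, and that the edges $x_3y_3,x_4y_4$ lie in $S$ and are traversed by the canonical Eulerian-type walk. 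The standard conversion of such an $S$ into a hamiltonian path of $S^2\subseteq G^2$ then produces the required $\mathcal{F}_4$ $x_1x_2$-hamiltonian path.

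If $G$ is edge-critical but not a DT-block, Theorem~\ref{The1} yields an edge $f\in D(G)$ for which an endblock $H$ of $G-f$ is a DT-block, attached at some cutvertex $c$ of $G-f$. I would split $G$ along $c$ into $H$ and the smaller $2$-block obtained by collapsing $H$ (with $f$ restored), solve $H$ by the DT-case with $c$ inserted among its prescribed vertices, solve the smaller block by the induction hypothesis, and splice the two hamiltonian paths at $c$, using $f$ or an edge of $G^2$ through $c$ as the connector.

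The main obstacle, and the reason the proof in \cite{FleChia} is long, will be the bookkeeping forced by the positions of $x_1,x_2,x_3,x_4$ relative to this decomposition: how many of them lie in $H\setminus\{c\}$ versus the remainder, whether some of them coincides with $c$ or is an endpoint of $f$, and how to distribute the two endpoint constraints and the two edge constraints between the two sub-instances so that each still has enough freedom. Making the induction close requires a sufficiently strengthened inductive statement that transports all four prescribed vertices through the reduction; checking that such a strengthening survives both the EPS-construction and the splicing step is the crux.
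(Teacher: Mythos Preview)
The paper does not contain a proof of Theorem~\ref{F_4}; it is quoted from \cite{FleChia} (with the DT-case treated earlier in \cite{FleEk}) and used as a black box in Lemma~\ref{blockchainpath} and throughout the proof of Theorem~\ref{maintheorem}. The only information the paper offers about the argument is the remark that ``the proof of this result is very long and is based on techniques developed by Fleischner in \cite{Fle}, \cite{Fle1}, \cite{Fle2} and by Fleischner and Hobbs in \cite{FleHob},'' which is entirely consistent with your outline: reduction to an edge-critical block, the EPS-graph machinery of \cite{Fle,FleHob} for the DT-case, and an inductive reduction via Theorem~\ref{The1} otherwise.

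One adjustment to your splicing step is worth noting, since the present paper does display the relevant mechanism (in Subcase~1.3 of the proof of Theorem~\ref{maintheorem}, following \cite{Fle1}): one does not ``collapse'' the DT-endblock $H$ to its attachment vertex, but rather replaces $H$ by a path $p$ of length~$3$ between the two attachment vertices (the cutvertex and the endpoint of $f$). Induction is applied to the resulting smaller $2$-block, and then one enumerates the finitely many ways the resulting hamiltonian path or cycle can traverse the four vertices of $p$ (the paper lists thirteen cases, reduces them to five, and handles each by substituting a suitable hamiltonian path or cycle of $H^2$ obtained from Theorem~\ref{2-blockcycle} or Theorem~\ref{strongF_3}). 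This path-replacement device, rather than a direct split-and-glue at a single cutvertex, is what generates the lengthy case analysis you anticipate.
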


A graph $G$ is said to have the strong $\mathcal{F}_{3}$ property if, for any set of 3 vertices $\{x_{1},x_{2},x_{3}\}$ in $G$, there is an 
$x_{1}x_{2}$-hamiltonian path in $G^{2}$ containing distinct edges $x_{3}z_{3},x_{i}z_{i}\in E(G)$ for a given $i\in\{1,2\}$. Such an 
$x_{1}x_{2}$-hamiltonian path in $G^{2}$ is called a strong $\mathcal{F}_{3}$ $x_{1}x_{2}$-hamiltonian path.

\begin{theorem}\emph{\textbf{\cite{FleChia}}}
 \label{strongF_3}
  Every 2-block has the strong $\mathcal{F}_{3}$ property.
\end{theorem}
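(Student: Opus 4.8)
The plan is to derive the strong $\mathcal F_3$ property from the $\mathcal F_4$ property (Theorem~\ref{F_4}), supplementing it with the structure theory of $2$-blocks for the awkward configurations and, where convenient, running an induction on $|V(G)|$. Fix a $2$-block $G$, distinct vertices $x_1,x_2,x_3$, and — by the obvious symmetry — suppose the prescribed index is $i=1$, so we must construct an $x_1x_2$-hamiltonian path $P$ in $G^2$ containing an edge $x_3z_3\in E(G)$ together with an edge $x_1z_1\in E(G)$, the latter being the unique edge of $P$ at the endpoint $x_1$, with $x_3z_3\ne x_1z_1$. The base case $|V(G)|=3$ is immediate, since then $G^2=K_3$ and all three of its edges lie in $G$.

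For the main step I would first try to reduce to Theorem~\ref{F_4}. Choose a neighbour $w$ of $x_1$ in $G$ with $w\notin\{x_2,x_3\}$ — preferring, when available, a $w$ that dominates $N_G(x_1)$, i.e. one to which every other neighbour of $x_1$ is $G$-adjacent — and apply Theorem~\ref{F_4} to the four distinct vertices $x_1,x_2,x_3,w$ with endpoints $x_1,x_2$ and covered vertices $x_3$ and $w$. This produces an $x_1x_2$-hamiltonian path $Q$ in $G^2$ traversing distinct edges $x_3y_3,\,wy_w\in E(G)$. If the edge of $Q$ at $x_1$ already lies in $G$ we are done with $P=Q$. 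Otherwise, since $w\sim_G x_1$ and $wy_w\in E(G)$ furnish $G$-paths of length at most $2$ around $x_1$ and $w$, the aim is to delete the offending edge of $Q$ at $x_1$ together with one or two edges of $Q$ near $w$ and to splice $x_1w$ (or $x_1y_w$) in as the new — now in-$G$ — edge at $x_1$, while keeping $x_3y_3$ on the path. One way to organise this exchange is to subdivide the edge $x_1w$ by a new vertex $s$, obtaining a $2$-block on $|V(G)|+1$ vertices, apply its $\mathcal F_4$ property with $s$ as one of the covered vertices — so that the forced $G$-edge at $s$ is one of the two edges incident with $s$, which partially fixes the position of $s$ on the path — and then suppress $s$: when $s$ lies between $x_1$ and $w$ this reinstates $x_1w\in E(G)$ and returns a hamiltonian path of $G^2$.

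The step I expect to be the genuine obstacle is this rerouting near $x_1$. A naive exchange produces a new ``loose end'' that must be reconnected by an edge whose endpoints are only known to be at $G$-distance at most $3$, so it may be absent from $G^2$ altogether; and even when it is present in $G^2$ it need not lie in $G$, which is exactly the constraint on the edge at $x_1$. Choosing $w$ to dominate $N_G(x_1)$ is meant to pull the critical reconnecting pair within $G$-distance $2$ of $w$, but when no such $w$ exists — the neighbours of $x_1$ being too spread out, as already in $C_5$ — or when $N_G(x_1)=\{x_2,x_3\}$ so that $w$ cannot be chosen at all, these configurations must be dealt with by other means: either by re-choosing the auxiliary vertex according to the local shape of $Q$ at $x_1$; or, if $G$ has a vertex of degree $2$, by suppressing it and appealing to the inductive hypothesis, taking care that edges of $Q$ which lie in the square of the suppressed graph but not in $G^2$ can be rerouted through the suppressed vertex, and using Theorem~\ref{2-blockcycle} where a cycle with two prescribed $G$-edges at one vertex suffices; or, when $G$ is reduced, by passing to an edge-critical sub-block and invoking Theorem~\ref{The1} to split off a DT-endblock along an edge of $D(G)$, then applying Theorem~\ref{blockchaincycle} to the resulting blockchain. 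As in \cite{FleChia}, I would expect putting all of this together to require a fairly extended case analysis rather than a short argument.
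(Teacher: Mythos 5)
Note first that the paper contains no proof of Theorem~\ref{strongF_3}: it is quoted from \cite{FleChia}, where it is established (alongside Theorem~\ref{F_4}) by a long argument built on the EPS-graph techniques of \cite{Fle}, \cite{Fle1}, \cite{Fle2} and \cite{FleHob}; no reduction of the strong $\mathcal{F}_3$ property to the $\mathcal{F}_4$ property is claimed anywhere. Judged on its own, your proposal is a programme rather than a proof, and the step you yourself flag as ``the genuine obstacle'' --- rerouting near $x_1$ so that the end-edge of the path lands in $G$ while the prescribed edge at $x_3$ survives --- is exactly where it fails. In particular the subdivision device does not deliver the needed control. Let $G'$ be $G$ with $x_1w$ subdivided by $s$ and apply the $\mathcal{F}_4$ property of $G'$ with endpoints $x_1,x_2$ and covered vertices $x_3,s$. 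The conclusion only forces \emph{one} $G'$-edge of the path at $s$, and that edge may be $sw$ with $s$ in the interior of the path; the path then still begins at $x_1$ with an arbitrary $(G')^2$-edge over which you have no control. And in the favourable case where the path begins $x_1,s,u,\dots$, the edge $x_1s$ lies in $G'$ automatically, so the covering condition at $s$ is satisfied vacuously there, and suppressing $s$ leaves the end-edge $x_1u$ with $u$ merely at $G'$-distance two from $s$ --- typically a neighbour of $w$ not adjacent to $x_1$ in $G$. In neither case do you gain what the ``strong'' in strong $\mathcal{F}_3$ demands, namely that the unique path-edge at the endpoint $x_i$ be an edge of $G$.

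The fallback devices you list for the bad configurations --- choosing $w$ to dominate $N_G(x_1)$, suppressing a $2$-valent vertex and inducting, splitting off a DT-endblock via Theorem~\ref{The1} and invoking Theorem~\ref{blockchaincycle} --- are each named but none is carried out, and you give no argument that they exhaust all cases (your own example $C_5$ already defeats the domination choice, and $N_G(x_1)=\{x_2,x_3\}$ defeats the choice of $w$ altogether). As it stands, the only configurations the proposal actually settles are $|V(G)|=3$ and the lucky case in which Theorem~\ref{F_4} happens to return a path whose edge at $x_1$ already lies in $G$. To make this rigorous you would have to either carry out the full case analysis you allude to (which is essentially what \cite{FleChia} does, by a simultaneous induction rather than by deducing one property from the other), or else simply cite \cite{FleChia}, as the present paper does.
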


The following lemma is frequently used in the proofs below.

\begin{lemma}
 \label{blockchainpath}
 Let $G$ be a non-trivial blockchain. We choose
 \begin{itemize}
  \item $c_{0}\in V(B_{1})$, $c_{k}\in V(B_{k})$ which are not cutvertices;
  \item $u_{i}\in V(B_{i})$ (if any) which is not a cutvertex and $v_{i}\in V(B_{i})$ such that 
        $u_{i}\neq v_{i}$, $u_{1}\neq c_{0}$ and $u_{k}\neq c_{k}$, for $i=1,2,...k$.
\end{itemize} 
 Then $G^{2}$ contains a $c_{0}c_{k}$-hamiltonian path $P$ such that there exist distinct edges $u_{i}u'_{i}$ $v_{i}v'_{i}\in E(B_{i})\cap E(P)$ (if $u_{i}$ exists), $i=1,2,...,k$.
\end{lemma}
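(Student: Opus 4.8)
The plan is to induct on the number $k$ of blocks of the blockchain $G$. The base case $k=1$ is not a blockchain in the present sense (it would be trivial), so the genuine base case is $k=2$: here $G=B_1\cup B_2$ with a single cutvertex $c_1$, and we must produce a $c_0c_k$-hamiltonian path in $G^2$ traversing, in each $B_i$, the prescribed edges at $u_i$ (if $u_i$ exists) and at $v_i$. The idea is to split this into two hamiltonian-path problems, one in $B_1^2$ from $c_0$ to $c_1$ and one in $B_2^2$ from $c_1$ to $c_k$, and concatenate them at $c_1$; the edges incident with $c_1$ in the two pieces need not be in $G$, since $c_1$ is an inner vertex of $G$. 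To force the required $G$-edges at $u_i,v_i$ one invokes the $\mathcal{F}_4$ property (Theorem~\ref{F_4}) or the strong $\mathcal{F}_3$ property (Theorem~\ref{strongF_3}) of the 2-block $B_i$: when both $u_i$ and $v_i$ must carry a marked $G$-edge and neither is an endpoint $c_{i-1},c_i$ of the sub-path, apply $\mathcal{F}_4$ to $\{c_{i-1},c_i,u_i,v_i\}$; when $v_i\in\{c_{i-1},c_i\}$ (or $u_i$ does not exist) fall back on strong $\mathcal{F}_3$ or Theorem~\ref{2-blockcycle}. One must of course handle the degenerate possibility that a block $B_i$ is a bridge, in which case $B_i^2=B_i$ and the single edge of $B_i$ is forced anyway, and the sub-cases where $c_{i-1}$ or $c_i$ coincides with one of the chosen $u_i,v_i$.

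For the inductive step, write $G = G' \cup B_k$ where $G' = B_1 \cup \dots \cup B_{k-1}$ is a non-trivial blockchain (if $k-1 \ge 2$) or the single 2-block $B_1$ (if $k=2$, already covered), sharing the cutvertex $c_{k-1}$ with $B_k$. Pick an inner vertex $c_{k-1}^{*}$ of $B_{k-1}$ distinct from $c_{k-2}$ and, if possible, from $u_{k-1}, v_{k-1}$; this will serve as the $c_k$-role endpoint for the sub-blockchain $G'$. Apply the induction hypothesis to $G'$ with endpoints $c_0$ and $c_{k-1}^{*}$ to get a $c_0c_{k-1}^{*}$-hamiltonian path $P'$ in $G'^2$ with the marked edges in $B_1,\dots,B_{k-1}$; separately handle $B_k$ as in the base case to get a $c_{k-1}c_k$-hamiltonian path $P_k$ in $B_k^2$ with marked edges at $u_k,v_k$. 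The two paths meet $V(B_{k-1}) \cap V(B_k) = \{c_{k-1}\}$; but $P'$ ends at $c_{k-1}^{*} \neq c_{k-1}$, so a direct concatenation is not available, and this is the crux of the argument.

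The main obstacle, then, is the gluing: one needs $P'$ to pass through $c_{k-1}$ via an edge of $B_{k-1}$ (so that it can be rerouted), or more cleanly, to treat $c_{k-1}$ as a \emph{cutvertex} rather than hiding it inside $G'$. I expect the cleanest fix is to strengthen the inductive setup: prove the statement with the option of demanding that the $c_k$-endpoint of the hamiltonian path be reached along a $G$-edge of $B_k$, i.e. carry along enough of the endblock structure of Theorem~\ref{blockchaincycle} / Theorem~\ref{2-blockcycle} so that at $c_{k-1}$ the path $P'$ uses an edge $c_{k-1}z \in E(B_{k-1})$; then delete that edge, delete the short detour through $c_{k-1}^{*}$ if it was an artificial choice, and splice in $P_k$ so that the combined path enters $B_k$ at $c_{k-1}$ and exits $B_{k-1}$ through $z$ instead. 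Concretely: in $G'^2$ one asks (using $\mathcal{F}_4$ on $B_{k-1}$ with vertex set $\{c_{k-2}, c_{k-1}, \text{and the two of } u_{k-1},v_{k-1}\}$, noting $c_{k-1}$ now plays the role of an ordinary endpoint) for a $c_0c_{k-1}$-hamiltonian path in $G'^2$ whose last edge lies in $B_{k-1}$ — but $c_{k-1}$ is already a natural endpoint of $G'$, so no artificial $c_{k-1}^{*}$ is needed at all. I would restructure the proof to avoid $c_{k-1}^{*}$: apply induction to $G'$ with endpoints $c_0$ and $c_{k-1}$ directly, apply the base-case analysis to $B_k$ with endpoints $c_{k-1}$ and $c_k$, and concatenate at $c_{k-1}$, which is legitimate precisely because $c_{k-1}$ is an inner vertex of $G$ and hence the edges of the resulting hamiltonian path at $c_{k-1}$ are allowed to leave $E(G)$. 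The remaining bookkeeping — the several positions of $c_0, c_k, u_i, v_i$ relative to cutvertices, the possibility $u_i$ is undefined, bridge blocks, and ensuring the $c_{k-1}$-incident edges of the two sub-paths are distinct so their union is a path and not merely a walk — is routine casework carried out exactly as in the base case, appealing to Theorems~\ref{2-blockcycle}, \ref{F_4} and \ref{strongF_3} in each 2-block separately.
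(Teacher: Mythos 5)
Your final, corrected approach is essentially the paper's proof: the paper simply takes, in each block $B_i$, a $c_{i-1}c_i$-hamiltonian path in $B_i^2$ (via Theorem~\ref{F_4} when $v_i\notin\{c_{i-1},c_i\}$, Theorem~\ref{strongF_3} when $v_i\in\{c_{i-1},c_i\}$, and $P_i=B_i$ when $B_i$ is a bridge) and concatenates them at the cutvertices, which is exactly your induction unwound. The detour through $c_{k-1}^{*}$ was a false obstacle, as you yourself concluded; since consecutive pieces meet only in the cutvertex and the lemma imposes no condition there, the union is immediately a hamiltonian path with the required edges.
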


\begin{proof}
 If $B_{i}$ is 2-connected, then let $P_{i}$ be an $\mathcal{F}_{4}$ $c_{i-1}c_{i}$-hamiltonian path in $B_{i}^{2}$ containing 2 distinct edges $u_{i}u'_{i},v_{i}v'_{i}\in E(B_{i})$ for $v_{i}\notin \{c_{i-1},c_{i}\}$ by Theorem~\ref{F_4}; and let $P_i$ be a strong $\mathcal{F}_{3}$ $c_{i-1}c_{i}$-hamiltonian path in $B_{i}^{2}$ containing 2 distinct edges $u_{i}u'_{i},v_{i}v'_{i}\in E(B_{i})$ for $v_{i}\in \{c_{i-1},c_{i}\}$ by Theorem~\ref{strongF_3}, respectively. 
 
 If $B_{i}=c_{i-1}c_{i}$, then we set $P_{i}=B_{i}$. Note that in this case $u_{i}$ does not exist and $v_{i}\in \{c_{i-1},c_{i}\}$. 
 
 Then $P=\cup_{i=1}^{k}P_{i}$ is a $c_{0}c_{k}$-hamiltonian path in $G^{2}$ as required. 
\end{proof}

The concept of EPS-graphs plays a central role in proofs of hamiltonicity in the square of a $DT$-graph (see \cite{Fle}). We use this concept also in one part of the proof of Theorem \ref{maintheorem}. Let $G$ be a graph. An \emph{EPS-graph} is a spanning connected subgraph $S$ of $G$ which is the edge-disjoint union of an eulerian graph $E$ (which may be disconnected) and a linear forest $P$. For $S=E\cup P$, let $d_{E}(v)$, $d_{P}(v)$ denote the degree of $v$ in $E$, $P$, respectively. 

Fleischner and Hobbs introduced in \cite{FleHob} the concept of $W$-soundness of a cycle. Let $W$ be a set of vertices of $G$. A cycle $K$ is called 
\emph{$W$-maximal} if $|V(K')\cap W|\leq|V(K)\cap W|$ for any cycle $K'$ of $G$. Let $K$ be a cycle of $G$ and let $W$ be a set of vertices of $G$. 
A blockchain $P$ of $G-K$ is \emph{a $W$-separated $K$-to-$K$ blockchain based on vertex $x$} if a vertex of $W$ is a cut vertex of $P$, both endblocks $B$ and $B'$ of $P$ include vertices of $K$, $V(B)\cap V(K)=\{x\}$, no vertex of $K$ is a cutvertex of $P$, and $(V(P)\cap V(K))-\{x\}\subseteq V(B').$ For a given path $p=v_1,v_2,...,v_{n-1},v_n$ we let $F(p)=v_1$, $L(p)=v_n$.

\begin{definition}
 \label{sound}
 A cycle $K$ in $G$ is \emph{$W$-sound} if it is $W$-maximal, $|W|=5$ and the following hold:
\begin{itemize}
 \item[(1)] $|V(K)\cap W|\geq 4$; or
 \item[(2)] $|V(K)\cap W|=3$ and the following situation does not prevail; there are two $W$-separated $K$-to-$K$ blockchains $P$ and $Q$ of $G-K$ based on
            a vertex $w$ of $W$ such that $V(P)\cap V(Q)=\{w\}$ and if $p$ is a shortest path in $P$ from $w$ to a vertex of $K$ different from $w$ and $q$
            is the same for $Q$, then there is a subsequence $w,w',L(p),L(q),w'',w$ of $K$ where $w'$ and $w''$ are in $W-\{w\}$; or
 \item[(3)] $|V(K)\cap W|=2$ and the following situation does not prevail; there are three $W$-separated $K$-to-$K$ blockchains $P_1,P_2$ and $P_3$ of $G-K$
            based on a single vertex $a$ of $V(K)-W$, such that $V(P_i)\cap V(P_j)=\{a\}$ whenever $i$ and $j$ are distinct elements of $\{1,2,3\}$, and if 
            $p_i$ is a shortest path in $P_i$ from $a$ to a vertex of $K$ different from $a$ for each $i\in \{1,2,3\}$, then there is a subsequence
            $a,w',L(p_1),L(p_2),L(p_3),w'',a$ of $K$ where $\{w',w''\}=V(K)\cap W$.
\end{itemize}
\end{definition}

We observe that Definition \ref{sound} is basically the content of Lemma 1 in \cite{FleHob}. That is, said lemma guarantees that for every choice $W\subseteq V(G)$ with $|W|=5$ in a 2-block $G$ of order at least 5, there is a $W$-sound cycle in $G$.

\begin{theorem}\emph{\textbf{\cite{FleHob}}}
 \label{The2}
 Let $G$ be a 2-block and $W$ a set of five distinct vertices in $G$, and let $K$ be a $W$-sound cycle in $G$. Then there is an EPS-graph 
 $S=E\cup P$ of $G$ such that $K\subseteq E$ and $d_{P}(w)\leq 1$ for every $w\in W$.
\end{theorem}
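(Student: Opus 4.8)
The plan is to prove the statement by induction on the number $|E(G)|-|E(K)|$ of edges lying off the sound cycle $K$. In the base case $G=K$ we set $E:=K$ and $P:=\emptyset$: then $E$ is a single cycle, hence eulerian, $S=E$ is spanning and connected, $K\subseteq E$, and $d_P(w)=0\le 1$ for every $w\in W$. For the inductive step we analyse the bridges of $K$, i.e.\ the connected components of $G-E(K)$ taken together with their vertices of attachment on $K$ (chords of $K$ being the one-edge bridges); since $G$ is $2$-connected, each such bridge meets $K$ in at least two vertices.

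The engine of the proof is to incorporate the bridges into the growing EPS-graph one block at a time, always keeping $K$ inside the eulerian part $E$. The guiding principle is that every interior vertex of a path of the linear forest $P$ automatically has $P$-degree $2$; hence a vertex $w\in W$ must never be interior to a $P$-path and may carry at most one pendant $P$-edge, so all remaining edges at such a $w$ have to be absorbed into $E$, where they must be paired off to preserve evenness. (In particular a single chord cannot be placed in $E$ by itself, as that would make both of its endpoints odd.) Concretely, I would process a chosen bridge $B$ by decomposing it along its block-cutvertex tree and, where a block must be reduced further, appealing to the DT-structure of edge-critical blocks from Theorem~\ref{The1}; I would then route a closed eulerian ``skeleton'' through those attachment vertices and cutvertices of $B$ that carry vertices of $W$, and hang the leftover blocks of $B$ on this skeleton as paths of $P$. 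A leaf block glued in at a single cutvertex contributes at most one $P$-edge there, so the only way a vertex can reach $P$-degree $2$ is by becoming a genuine branch point of the $P$-forest.

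This is exactly where $W$-soundness enters, and I would split into cases according to $|V(K)\cap W|$, mirroring Definition~\ref{sound}. If $|V(K)\cap W|\ge 4$ (case (1)), at most one special vertex lies off $K$, leaving ample room to route the forest around the four on-cycle vertices and keep each of their $P$-degrees at most one. If $|V(K)\cap W|$ equals $3$ or $2$, the danger is a base vertex at which several $W$-separated $K$-to-$K$ blockchains meet — a vertex $w\in W$ in case (2), or a non-$W$ vertex $a$ of $K$ flanked by the two $W$-vertices in case (3) — whose presence would force a vertex of $W$ to acquire two independent $P$-edges. The negated configurations in clauses (2) and (3) are tailored to outlaw exactly these forced branch points, so in every permitted configuration one can choose which blockchain to close into a cycle of $E$ (using an arc of $K$) and which to leave in $P$, ensuring that no vertex of $W$ becomes a branch point.

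The main obstacle, and the reason the original argument is long, is twofold. First, one must verify that after peeling off a block (or an ear) the cycle $K$ remains $W$-sound in the smaller graph $G'$, so that the induction hypothesis applies; deleting edges can both create and destroy the forbidden blockchain patterns, and this bookkeeping is delicate. Second, one must coordinate several bridges based at the same vertex simultaneously: because the soundness conditions speak of two or three blockchains sharing a base vertex, the eulerian/forest assignment has to be made consistently across all of them while respecting the cyclic order of attachments on $K$ recorded by the subsequences in Definition~\ref{sound}. Managing this cyclic-order bookkeeping is where the bulk of the case analysis resides.
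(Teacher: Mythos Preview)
The paper does not contain a proof of this theorem: it is quoted verbatim from Fleischner and Hobbs~\cite{FleHob} and used as a black box in Case~2a) of the proof of Theorem~\ref{maintheorem}. There is therefore no ``paper's own proof'' to compare your proposal against.

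As for the proposal itself, what you have written is a strategy outline rather than a proof. You correctly identify the governing constraint (vertices of $W$ must not become interior vertices of $P$-paths) and correctly locate where the three clauses of Definition~\ref{sound} are meant to bite, but you then explicitly flag the two hard parts---preservation of $W$-soundness under edge deletion, and the simultaneous handling of several blockchains based at the same vertex---without carrying either of them out. In particular, the claim that ``in every permitted configuration one can choose which blockchain to close into a cycle of $E$ \ldots\ and which to leave in $P$'' is exactly the content of the theorem in the delicate cases, and asserting it is not the same as proving it. The induction you set up on $|E(G)|-|E(K)|$ is also not obviously well-founded for your purposes: peeling off a single block of a bridge need not leave a $2$-block, and if you pass to a spanning subgraph the cycle $K$ may cease to be $W$-maximal (hence $W$-sound) there, so the induction hypothesis would not apply. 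A genuine proof along these lines, as in~\cite{FleHob}, requires a substantially more careful inductive setup and an explicit case analysis tied to the cyclic order of attachment vertices on $K$; your sketch gestures at this but does not supply it.
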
 



\section{Proof of Theorem \ref{maintheorem}}
 
\begin{proof}
 First we prove that $G$ has the $\mathcal{H}_{4}$ property. We proceed by contradiction supposing that $|V(G)|+|E(G)|$ is minimal. It follows that $G$ is an edge-critical block and in particular $|V(G)|\geq 5$. We distinguish cases by the number of edges in $D(G)$. The reader is advised to draw figures where he/she deems it necessary to follow our case distinctions.
 
 \noindent 
 \emph{Case 1.} $|D(G)|>0.$
  By Theorem~\ref{The1}, let $f=x'x\in D(G)$ be an edge such that $d_{G}(x'), d_{G}(x)\geq 3$.
  Then $G-f$ is a blockchain and both endblocks $B',B$ of $G-f$ are 2-blocks. Set 
  $X=\{x_1,x_2,x_3,x_4\}$. Without loss of generality assume that $|X\cap(V(B)-y)|\leq 2$ 
  (otherwise we consider $B'$ instead of $B$); i.e., at most $x_{1},x_2\in V(B)-y$, say, where 
  $x, y\in V(B)$ and $y$ is a cutvertex of $G-f$. We distinguish the following 3 subcases. 

 \medskip  
  
 \emph{Subcase 1.1}:  $|X\cap(V(B)-y)|=2$; i.e., $x_1, x_2\in V(B)-y.$
 
  Then $B^2$ has an $xy$-hamiltonian path $P_1$ containing different edges $x_1y_1$, $x_2y_2$ of 
  $E(G)$ for certain $y_1,y_2$ by Theorem \ref{F_4} or by Theorem \ref{strongF_3} if $x_1=x$ or 
  $x_2=x$; and $(G-B)^2$ has an $xy$-hamiltonian path $P_2$ containing different edges 
  $x_3y_3,x_4y_4$ of $E(G)$ for certain $y_3,y_4$ by Lemma \ref{blockchainpath}. Now $P_1 \cup P_2$ 
  is a required hamiltonian cycle in $G^2$, a contradiction. Note that $x_3,x_4\in V(B')-y'$ 
  where $y'\in V(B')$ is a cutvertex of $G-f$, otherwise we can use $B'$ instead of $B$ and 
  $x_3$ or $x_4$ instead of $x_1$ or $x_2$ (see \emph{Subcase 1.2} or \emph{Subcase 1.3} below).    
   
 \medskip
 
 \emph{Subcase 1.2}: $|X\cap(V(B)-y)|=1$; i.e., $x_1 \in V(B)-y$ and $x_2 \notin V(B)-y.$
 
 \emph{(1.2.1)} Assume that $x_2,x_3,x_4$ are not inner vertices of $G$ in the same block of $G-B$.
 We proceed very similar as in \emph{Subcase 1.1}; we use only the strong $\mathcal{F}_{3}$ 
 property in $B$, and $G-B$ is a non-trivial blockchain. Hence we can apply Lemma
 \ref{blockchainpath} except if $x=x_1$, some $x_i=y$ for $i\in \{2,3,4\}$, say $i=2$, and 
 $x_3,x_4$ are inner vertices in the same endblock of $G-B$ which also contains $x_2$. 
 
 If $x=x_1$, $x_2=y$, and $x_3,x_4$ are inner vertices in the same endblock of $G-B$ which also 
 contains $x_2$, then $B^2$ has an $x_2x_1$-hamiltonian path $P_1$ containing different edges 
 $x_2y_2$, $uv$ of $E(G)$ for certain $y_2,u,v$ by Theorem~\ref{strongF_3}, and $(G-B)^2$ has an 
 $x_2x_1$-hamiltonian path $P_2$ containing different edges $x_1x',x_3y_3,x_4y_4$ of $E(G)$ for 
 certain $y_3,y_4$ by Lemma \ref{blockchainpath}. Again, $P_1 \cup P_2$ is a required hamiltonian 
 cycle in $G^2$, a contradiction.
 
 \emph{(1.2.2)} Assume that $x_2,x_3,x_4$ are inner vertices of $G$ in the same block $B^*$ 
 of $G-B$.
 
 Clearly, $B^2$ contains a hamiltonian cycle $H_{B}$ containing 3 different edges $y'y,x'_1x_1,x''x$ of $E(B)$ for certain vertices $y',x'_1,x''$ by Theorem \ref{F_4} (starting with a corresponding $\mathcal{F}_{4}$ $x''x$-hamiltonian path in $B^{2}$) if $x\neq x_1$, and $y'y,x'_1x,x''x$ of $E(B)$ for certain vertices $y',x'_1,x''$ by Theorem \ref{2-blockcycle} if $x=x_1$.

 Let $G_1$ be the component of $G-B^*-xx'$ containing $B$ and $y^*=V(B^*)\cap V(G_1)$. Note that 
 $G_1$ is a trivial or non-trivial blockchain.
 
 (a) If $y^*=y$, then $G_1=B$ and we set $H_{G_1}=H_B$ (see above). 
 
 (b) If $y^*\neq y$, then either $G_1-B=y^*y$ or $(G_1-B)^2$ contains a hamitonian cycle $C$ containing edges $y^*_1y^*$, $y''y$ of $E(G_1-B)$ for certain $y^*_1,y''$ by applying Theorem \ref{2-blockcycle} or Theorem \ref{blockchaincycle}. 
 
 Now we set $$H_{G_1}=(H_B-y'y)\cup y'y^*$$ and $y_1^*=y$ if $G_1-B=y^*y$; and 
 $$H_{G_1}=(H_B\cup C-\{y'y,y''y\})\cup y'y''$$ if $G_1-B\neq y^*y$. 
 
 Note that the edge $y_1^*y^*\in E(G_1)$ is contained in $H_{G_1}$ in both cases.  
 
 \medskip

 Clearly, $|V(B^*)|+|E(B^*)|<|V(G)|+|E(G)|$. Hence $(B^*)^2$ contains a hamiltonian cycle $H_{B^*}$ containing four different edges $y_2^*y^*,x_2x'_2,x_3x'_3,x_4x'_4$ of $E(B^*)$ for certain vertices $y_2^*,x'_i$, $i=2,3,4$.
 
 \medskip
 
 Let $z\in V(B^*)$ be the cutvertex of $G-x'x$ different from $y^*$. 
 
 (A) $x'=z$. Then $$(H_{G_1}\cup H_{B^*}-\{y_2^*y^*,y_1^*y^*\})\cup\{y_1^*y_2^*\}$$ is a required hamiltonian cycle in $G^2$ containing four different edges $x_ix'_i,$ of $E(G)$, $i=1,2,3,4$, a contradiction.
  
 (B) $x'\neq z$
 
 If $d_{G-B^*}(z)=1$, then we set $G_2=G-G_1-B^*-z_1z$ where $z_1$ is the unique neighbour of 
 $z$ in $G-B^*$; otherwise we set $G_2=G-G_1-B^*$. Note that $G_2$ is a trivial or non-trivial 
 blockchain and $G_2=x'x$ is not possible because of $d_G(x')>2$.

We apply Theorem \ref{blockchaincycle} such that either $(G_2)^2$ contains a hamitonian cycle $H_{G_2}$ with $x'x\in E(H_{G_2})$ if $z\notin V(G_2)$, or $(G_2)^2$ contains a hamitonian cycle $H$ containing the edge $x'x$ and different edges $z_1z,z_2z$ of $G_1$ for certain $z_1,z_2$ if $z\in V(G_2)$. In the latter case we set $H_{G_2}=(H-\{z_1z,z_2z\})\cup z_1z_2$. Then $$(H_{G_1}\cup H_{G_2}\cup H_{B^*}-\{y_2^*y^*,y_1^*y^*,x'x,x''x\})\cup\{y_1^*y_2^*,x''x'\}$$ is again a hamiltonian cycle in $G^2$ containing four different edges $x_ix'_i$ of $E(G)$, $i=1,2,3,4$, a contradiction.
 
\medskip 
  
 \emph{Subcase 1.3}: $|X\cap(V(B)-y)|=0$; i.e., $x_1, x_2\notin V(B)-y.$
 
 Let $G_1$ be a graph which arises from $G$ by replacing $B$ with a path $p$ of length 3, 
 say $p=x,a,b,y$. Then $|V(G_1)|+|E(G_1)|<|V(G)|+|E(G)|$ since $B$ is not a triangle because $G$ 
 is edge-critical. Hence $(G_1)^2$ contains a hamiltonian cycle $H_1$ containing four different 
 edges $x_iy_i$ of $E(G_1)$ for certain vertices $y_i$, $i=1,2,3,4$, and as many edges as possible 
 of $G_1$.
 
 In the following we shall proceed in a manner very similar to the proof that the square of a 2-block is hamiltonian, \cite{Fle1}. However, in order to avoid total dependence of the reader on the knowledge or study of \cite{Fle1}, we shall describe and partially repeat the procedure employed in that paper. In particular, we shall quote the cases with the numbering of \cite{Fle1}.
 
 This yields the consideration of 13 cases how the hamiltonian cycle $H_1$ traverses vertices of the path $p$. As in \cite{Fle1}, Cases 3, Case 4, Case 12, and Case 13 are contradictory to the maximality of the number of edges of $G_1$ belonging to $H_1$; and Case 6 can be reduced to Case 10, Case 8 to Case 7, Case 10 to Case 9 and Case 11 to Case 5. Note that by the reductions we preserve the existence of the edges $x_iy_i$ even if $x_i\in\{x',y\}$ for $i\in\{1,2,3,4\}$. 
 
 The remaining 5 cases are (using the labeling of vertices $x',x,a,b,y$ instead of $x,w,a,b,v$ in \cite{Fle1}): 
 
 \emph{Case 1.} $H_1=...,x,a,b,y,...$ 
 
 \emph{Case 2.} $H_1=...,x,a,b,y',...$  
  
 \emph{Case 5.} $H_1=...,x',a,b,x,...$  
   
 \emph{Case 7.} $H_1=...,x',a,y,...,y',b,x$  
    
 \emph{Case 9.} $H_1=...,x',a,y,b,x...$;
 
\noindent
 and $y'y$ is an edge of $G$.    
 
 In order to extend $H_1$ to $H$ in $G^2$ in these five cases with $H$ having the required property, one can proceed in the same way as it has been done in~\cite{Fle1}. However, we deem it necessary to show explicitly that no problems arise under the stronger condition of this theorem (similarly as in \cite{Fle2}).

 \emph{Case 1.} By Theorem \ref{strongF_3}, $B^2$ has an $xy$-hamiltonian path $P$ starting with an edge $yy^*$ of $E(B)$ and containing an edge $uv$ of $B$ for certain vertices $u,v$. Replace in $H_1$ the path $p$ with a hamiltonian path $P$ and we get a hamiltonian cycle $H$ as required.
 
 \emph{Case 2.} Take $P$ as in Case 1 and replace in $H_1$ the path $x,a,b,y'$ with $(P-yy^*)\cup y'y^*$ and again we get a hamiltonian cycle $H$ as required. Note that $H$ contains all edges of $G$ belonging to $H_1$.
 
 \emph{Case 5.} By Theorem \ref{2-blockcycle}, $B^2$ contains a hamiltonian cycle $H_B$ such that both edges of $H_B$ incident to $y$ (say $yy^*,yy^{**}$) are in $B$ and at least one of the edges of $H_B$ incident to $x$ (say $xx^*$) is in $B$. We set $$H^*=(H_B-\{yy^*,yy^{**}\})\cup y^*y^{**}$$ which does not contain $y$, and replace in $H_1$ the path $x',a,b,x$ with $(H^*-xx^*)\cup x'x^*$, thus obtaining a hamiltonian cycle $H$ in $G^2$ which has the same behavior in all vertices of $G_1-\{a,b\}\subset G$ as $H_1$.

 \emph{Case 7.} Take $H_B$ as in Case 5 and replace in $H_1$ the path $x',a,y$ with the path $P_1\cup x^*x'$ where $P_1\subset H_B$ is the path from $y$ to $x^*$ and does not contain $x$; and 
replace in $H_1$ the path $y',b,x$ with the path $P_2\cup y't$ where $t\in \{y^*,y^{**}\}$ and $P_2\subset H_B$ is the path from $x$ to $t$ and does not contain any of $y$, $x^*$. Again we get a hamiltonian cycle $H$ as required.

 \emph{Case 9.} Take $H_B$ as in Case 5 and replace in $H_1$ the path $x',a,y,b,x$ with $(H_B-xx^*)\cup x'x^*$, thus obtaining a hamiltonian cycle $H$ in $G^2$ which has the same behavior in all vertices of $G_1-\{a,b,y\}\subset G$ as $H_1$ and both edges of $H$ incident to $y$ are in $G$.
 
 In all cases we obtained a hamiltonian cycle $H$ in $G^2$ containing four different edges $x_ix'_i,$ of $E(G)$ (in most cases we have $x'_i=y_i$; see the first paragraph of this subcase 1.3), $i=1,2,3,4$, a contradiction.

 \medskip  
  
 \noindent
 \emph{Case 2.} $|D(G)|=0.$ That is, $G$ is a $DT$-graph.
  
  \noindent
  a) Suppose $N(x_{i})\subseteq V_{2}(G)$ for every $i=1,2,3,4$.
  
 Set $W'=\{x_1,x_2,x_3,x_4\}$ and let $K$ be a $W'$-maximal cycle in $G$. Observe that $|V(K)|\geq 4$ since an edge-critical block on at least 4 vertices cannot contain a triangle. 
 
 If $|W'\cap V(K)|=4$, then we choose $x_5$ arbitrary in $V(G)-W'$. If $|W'\cap V(K)|=3$, then we choose $x_5$ arbitrary in $V(K)-W'$. If $|W'\cap V(K)|=2$, then we choose an arbitrary 2-valent vertex $x_5$ in $V(K)-W'$ which exists because all neighbours of $x_i$ are 2-valent.
 
 We set $W=W'\cup\{x_5\}$. Then $K$ is $W$-sound in $G$ unless $|W\cap V(K)|=3$ and forbidden situation (2) in Definition \ref{sound} arises. That is, without loss of generality $x_1,x_2\in V(K)$ and there exist $W$-separated $K$-to-$K$ blockchains $P$, $Q$ based on $x_i$, $i\in \{1,2\}$, $P\cap Q=x_i$, and paths $p,q$ in $P,Q$, respectively, such that there is a subsequence $x_i,w',L(p),L(q),w'',x_i$, where $\{w',w''\}=\{x_{3-i},x_5\}$ and $x_3,x_4\in V(p)\cup V(q)$. Then there is a cycle $K'$ containing $x_i,x_3,x_4$, a contradiction to the $W'$-maximality of $K$.
  
 By Theorem \ref{The2}, $G$ contains an EPS-graph $S=E\cup P$ such that $K\subseteq E$ and $d_{P}(w)\leq 1$ for every $w\in W$. If there is no adjacent pair $x_{i},x_{j}$ for $i,j\in \{1,2,3,4\}$, we use $S$ and an algorithm in \cite{Fle} to obtain a hamiltonian cycle in $G^{2}$ with the required properties, a contradiction. However, if there is an adjacent pair, say $x_1,x_2$, then $d_G(x_1)=d_G(x_2)=2$ and $d_P(x_1)=d_P(x_2)=0$ and we can proceed with the cycle $K$ containing $x_1,x_2,x_3$ to obtain a required hamiltonian cycle in $G^2$ as before, a contradiction.
  
  \noindent
  b) Without loss of generality suppose that $N(x_{4})\nsubseteq V_{2}(G)$. 
 
  Hence $\mbox{deg}_{G}(x_{4})=2$. Let $P_{4}=y_{4}x_{4}z_{1}...z_{k}$ be a unique path in $G$ such that $d_{G}(y_{4})>2$, $d_{G}(z_{k})>2$ and 
  $d_{G}(z_{i})=2$, for $i=1,2,...,k-1$. We set $G^-=G-\{x_{4},z_{1},...,z_{k-1}\}$, where $z_0=x_4$ if $k=1$.
  
  b1) Assume that $G^-$ is 2-connected.
  
  If $x_i\in V(G^-)-\{y_4,z_k\}$ for $i=1,2,3$, then $|V(G)|+|E(G)|>|V(G^-)|+|E(G^-)|$ and hence $(G^-)^{2}$ has a hamiltonian cycle $H^-$ containing  different edges $x_{i}y_{i},z_{k}w_{4}\in E(G)$, $i=1,2,3$. It is easy to see that we can extend $H^-$ to a hamiltonian cycle $H$ in $G^{2}$ such that $H$ contains edges $x_{i}y_{i}$, $x_{4}z_{1}$, for $i=1,2,3$, a contradiction.   
  
  Suppose $x_3\notin V(G^-)-\{y_4,z_k\}$. If $\{x_1,x_2,x_3\}\cap\{y_4,z_k\}\neq\emptyset\}$, then without loss of generality $x_3\in\{y_4,z_k\}$. By Theorem~\ref{F_4} or Theorem~\ref{strongF_3}, $(G^-)^{2}$ contains a $y_{4}z_{k}$-hamiltonian path $P^-$ and $P^-$ contains distinct edges $x_{i}y_{i}$ of $G$ if $x_i\in V(G^-)$ for $i=1,2$. Then $P^{-}\cup P_{4}$ is a hamiltonian cycle in $G^{2}$ with the required properties, a contradiction.
 
  b2) Assume that $G^-$ is not 2-connected.
  
  \noindent
  Then $G^-$ is a non-trivial blockchain with $y_{4},z_{k}$ in distinct endblocks and $y_{4},z_{k}$ are not cutvertices. 
  
  Assume not all $x_{1},x_{2},x_{3}$ are inner vertices in the same block. Then we apply Lemma~\ref{blockchainpath} to get a $y_{4}z_{k}$-hamiltonian path $P^-$ in $(G^-)^{2}$ with distinct edges $x_{i}y_{i}\in E(G^-)$, $i=1,2,3$. Note than $x_{i}$ could be $y_{4}$ or $z_{k}$. Then again 
$P^-\cup P_{4}$ is a hamiltonian cycle in $G^{2}$ with the required properties, a contradiction.  
  
  Now assume that $x_{1},x_{2},x_{3}$ are inner vertices in the same block $B$. Then there exists an end block $B^*$ of $G^-$ such that $x_{i}\notin V(B^*)$, $i=1,2,3$. A graph $G'$ arises from $G$ by the replacement of $B^*$ by a path $p$ of length 3. Hence $|V(G)|+|E(G)|>|V(G')|+|E(G')|$ and we denote by $H'$ a hamiltonian cycle in $(G')^{2}$ containing edges $x_{i}w_{i}$, $i=1,2,3,4$, and as many edges of $G'$ as possible.
  
 We proceed in the same manner as in Subcase 1.3 (note that in this case none of $x_i$, $i=1,2,3,4$, is on $p$) to get a hamiltonian cycle in $G^{2}$ with required properties, a contradiction. 
 
 \bigskip

 Finally we want to show that Theorem \ref{maintheorem} is best possible, i.e., we construct an infinite family of graphs which do not satisfy the $\mathcal{H}_{5}$ property. For this purpose start with an arbitrary 2-block $G$ and fix different vertices $x_1, x_2\in V(G)$. 
 
 Define $$H=G\cup \{y_1,y_2,...,y_t;t\geq 3\}\cup \{x_iy_j: 1\leq i\leq 2,1\leq j\leq t\},$$

where $\{y_1,...,y_t\}\cap V(G)=\emptyset$.

Then $H$ is a 2-block. However, $H$ does not have the $\mathcal{H}_{5}$ property: indeed, there is  no hamiltonian cycle $C$ in $H^2$ containing edges of $H$ incident to $x_1,x_2,y_1,y_2,y_3$ because of the neighbours of $y_1,y_2,y_3$ in $H$ are $x_1$ and $x_2$ only; that is $x_1$ or $x_2$ would be  incident to three edges of $C\cap H$, which is impossible.
\end{proof}
  
\section{Conclusion}
We introduced the concept of the $\mathcal{H}_{k}$ property and proved that every 2-block has the $\mathcal{H}_{4}$ property but not the $\mathcal{H}_{5}$ property in general. Similarly in \cite{FleChia} it is proved that every 2-block has the $\mathcal{F}_{4}$ property but not the $\mathcal{F}_{5}$ property in general. Moreover, a 2-block $G$ having the $\mathcal{F}_{k}$ property implies that $G$ has the $\mathcal{H}_{k-1}$ property for $k=3,4,...$. Hence we conclude that Theorem \ref{maintheorem} and Theorem \ref{F_4} are best possible with respect to hamiltonicity and hamiltonian connectedness in the square of a 2-block.

\medskip

\noindent 
{\bf Acknowledgements}.

\noindent
This publication was supported by the project LO1506 of the Czech Ministry of Education, Youth and Sports, and by FWF project P27615-N25.


\begin{thebibliography}{99} 
 \bibitem{Als} S. Alstrup, A. Georgakopoulos, E. Rotenberg, C. Thomassen, Carsten; A Hamiltonian 
               cycle in the square of a 2-connected graph in linear time; (English summary) 
               Proceedings of the Twenty-Ninth Annual ACM-SIAM Symposium on Discrete Algorithms, 
               1645–1649, SIAM, Philadelphia, PA, 2018.
 \bibitem{Bon} J.A. Bondy, U.S.R. Murty; Graph Theory, Graduate Texts in Mathematics 244; 
               Springer, New York 2008.              
 \bibitem{FleEk} G. L. Chia, J. Ekstein, H. Fleischner; Revisiting the Hamiltonian Theme in the 
                 Square of a Block: The Case of DT-graphs; Journal of Combinatorics 9 (2018), 
                 no.1, 119–161.  
 \bibitem{Ekst} J. Ekstein, Hamiltonian cycles in the square of a graph, The Electronic Journal of 
                Combinatorics 18 (2011), $\#$P203.                        
 \bibitem{Fle} H. Fleischner; On Spanning Subgraphs of a Connected Bridgeless Graph and Their 
              Application to DT-Graphs; Journal of Combinatorial Theory 16 (1974), 17-28.
 \bibitem{Fle1} H. Fleischner; The Square of Every Two-Connected Graph Is Hamiltonian; Journal of 
                Combinatorial Theory 16 (1974), 29-34. 
 \bibitem{Fle2} H. Fleischner; In the square of graphs, Hamiltonicity and pancyclicity, hamiltonian 
                connectedness and panconnectedness are equivalent concept; Monatsh. Math. 82 
                (1976), 125–149. 
 \bibitem{FleHob} H. Fleischner, A.M.Hobbs; Hamiltonian total graphs; Math. Nachr. 68 (1975), 
                  59-82. 
 \bibitem{FleChia} H. Fleischner, G. L. Chia; Revisiting the Hamiltonian Theme in the Square of 
                   a Block: The General Case; Journal of Combinatorics 10 (2019), no.1, 163–201.
 \bibitem{Geo} A. Georgakopoulos; A short proof of Fleischner’s theorem; Discrete Mathematics 309 
               (2009) 6632–6634.
 \bibitem{Mut} J. M\"{u}ttel, D. Rautenbach; A short proof of the versatile version of Fleischner’s 
               theorem; Discrete Math. 313 (2013) 1929–1933.                                            
                                  
\end{thebibliography}
\end{document}